\newtheorem{theorem}{Theorem}[section]
\newtheorem{proposition}[theorem]{Proposition}
\newtheorem{lemma}[theorem]{Lemma}
\theoremstyle{definition}
\newtheorem{example}[theorem]{Example}
\newtheorem{problem}{Problem}
\newtheorem{remark}[theorem]{Remark}
\theoremstyle{plain} 
\newcommand{\thistheoremname}{}
\newtheorem*{genericthm*}{\thistheoremname}
\newenvironment{namedthm*}[1]
  {\renewcommand{\thistheoremname}{#1}%
   \begin{genericthm*}}
  {\end{genericthm*}}
\newcommand\Nint{\mathbb{N}}
\newcommand\Zint{\mathbb{Z}}
\newcommand\norm[1]{\lvert #1 \rvert}
\newcommand\treeshift[1]{\mathcal{T}_{#1}}
\begin{document}

\title{Topological Entropy for Shifts of Finite Type Over $\mathbb{Z}$ and Trees}

\author[J-C Ban]{Jung-Chao Ban}
\address[Jung-Chao Ban]{Department of Mathematical Sciences, National Chengchi University, Taipei 11605, Taiwan, ROC.}
\address{Math. Division, National Center for Theoretical Science, National Taiwan University, Taipei 10617, Taiwan. ROC.}
\email{jcban@nccu.edu.tw}

\author[C-H Chang]{Chih-Hung Chang}
\address[Chih-Hung Chang]{Department of Applied Mathematics, National University of Kaohsiung, Kaohsiung 81148, Taiwan, ROC.}
\email{chchang@nuk.edu.tw}

\author[W-G Hu]{Wen-Guei Hu}
\address[Wen-Guei Hu]{College of Mathematics, Sichuan University, Chengdu, 610064, China}
\email{wghu@scu.edu.cn}

\author[Y-L Wu]{Yu-Liang Wu}
\address[Yu-Liang Wu]{Department of Applied Mathematics, National Chiao Tung University, Hsinchu 30010, Taiwan, ROC.}
\email{s92077.am08g@nctu.edu.tw}

\keywords{tree-SFT; topological entropy}
\subjclass[2010]{Primary 37B40}
\thanks{Ban and Chang are partially supported by the Ministry of Science and Technology, ROC (Contract No MOST 109-2115-M-004-002-MY2 and 109-2115-M-390-003-MY3). Hu is partially supported by the National Natural Science Foundation of China (Grant No.11601355).}

\date{April 24, 2022}

\baselineskip=1.2\baselineskip

\begin{abstract}
We study the topological entropy of hom tree-shifts and show that, although the topological entropy is not a conjugacy invariant for tree-shifts in general, it remains invariant for hom tree higher block shifts. In \cite{PS-TCS2018, PS-DCDS2020}, Petersen and Salama demonstrated the existence of topological entropy for tree-shifts and $h(\mathcal{T}_X) \geq h(X)$, where $\mathcal{T}_X$ is the hom tree-shift derived from $X$. We characterize a necessary and sufficient condition when the equality holds for the case where $X$ is a shift of finite type. Additionally, two novel phenomena have been revealed for tree-shifts. There is a gap in the set of topological entropy of hom tree-shifts of finite type, making such a set not dense. Last but not least, the topological entropy of a reducible hom tree-shift of finite type can be strictly larger than that of its maximal irreducible component.
\end{abstract}
\maketitle

\section{Introduction}

A $\mathbb{Z}^d$ shift space $X$ over alphabet $\mathcal{A}$ is a set of configurations that avoid any patterns labeled by $\mathcal{A}$ from some set $\mathcal{F}$; $X$ is a \emph{shift of finite type} (SFT) if $\mathcal{F}$ is finite. The study of $\mathbb{Z}^d$ SFTs is rife with numerous undecidability issues whenever $d \geq 2$. For instance, it is not even decidable whether $X$ is nonempty. We refer the reader to \cite{Berger-MAMS1966, Kari-DM1996, Robinson-IM1971}, for example.

Because of the complexity of dynamics of $\mathbb{Z}^d$ shift spaces as well as inspired by the physical models, the elucidation of \emph{hom-shifts} is imperative. A hom-shift is a nearest neighbor SFT, which is symmetric and isotropic; more explicitly, if $a, b \in \mathcal{A}$ are forbidden to sit next to each other in some coordinate direction, they are forbidden to sit next to each other in all coordinate directions. Many important SFTs, such as the hard square shift and the $n$-colored chessboard, arise as hom-shifts. Chandgotia and Marcus \cite{CM-PJM2018} investigated the mixing properties of hom-shifts and related them to some questions in graph theory therein.

Tree-shifts, which are shift spaces defined on free semigroups, have received extensive attention recently since they exhibit the natural structure of one-dimensional symbolic dynamics while equipped with multiple directional shift transformations. The dynamical phenomena of shift spaces on trees are fruitful since they constitute an intermediate class of symbolic dynamics between $\mathbb{Z}$ and $\mathbb{Z}^{d}$, $d\geq 2$. See \cite{AB-TCS2012, AB-TCS2013, BC-JMP2017, BC-TAMS2017, CCF+-TCS2013, FF-2012} and the references therein, for instance.

Aside from the elucidation of $\mathbb{Z}^d$ hom-shifts, investigating the hom tree-shifts is an alternative path on the study of abstract tree-shifts of finite type. It is to be noted that in this article we abuse the word `hom' to describe that a shift space admits rules (i.e., $\mathcal{F}$ in the case of $\mathbb{Z}^d$ shift space) concerning only the patterns on each infinite path and forgetting the directions in the underlying lattice; therefore, such a space consists of all the configurations each of whose infinite paths comes from a one-dimensional shift space, whether of finite type or not. Under the circumstances, the terms `hom Markov tree-shift' (see Section 1.2) and `hom tree-shift of finite type' (see Section 3) are used to emphasize the determining one-dimensional shift space is a nearest neighbor SFT and SFT, respectively. Mairesse and Marcovici \cite{MM-IJFCS2017} constructed a stationary Markov measure, a uniform measure, for a hom Markov tree-shift. Petersen and Salama \cite{PS-TCS2018, PS-DCDS2020} demonstrated that the topological entropy of arbitrary tree-shift exists as the infimum of the growth rate of $n$-blocks. They also revealed that the topological entropy of $X$ is not larger than the topological entropy of the hom tree-shift derived from $X$.

This paper, inspired by \cite{PS-TCS2018, PS-DCDS2020}, aims at the relations between $\mathbb{Z}$ SFT $X$ and its associated hom tree-shift $\mathcal{T}_X$, especially on the discussion of topological entropy. Theorem \ref{thm:irr_H_condition} gives a necessary and sufficient condition for the equality of topological entropy of $X$ and $\mathcal{T}_X$ when $X$ is Markovian. Although the topological entropy is not a conjugacy invariant for tree-shifts in general (see \cite{Bowen-2019} for a brief survey), Theorem \ref{thm:n_block_entropy_inv} reveals that the topological entropy remains the same for hom tree-shifts arose from higher block shifts; this, together with Theorem \ref{thm:irr_H_condition}, gives a criterion for the equality of topological entropy of $X$ and $\mathcal{T}_X$ for general SFTs. In the meantime, there are interesting phenomena, such as the existence of an SFT $X$ and $n \in \mathbb{N}$ such that $\mathcal{T}_X$ is not conjugate to $\mathcal{T}_{X^{[n]}}$ (Proposition \ref{prop:homTSFT-higher-block-not-conjugate}), there are SFTs $X, Y$ such that $X$ is conjugate to $Y$ but $h(\mathcal{T}_X) \neq h(\mathcal{T}_Y)$ (Proposition \ref{prop:counterexamples}), which remain to give more in-depth investigation.

Aside from the elucidation of relations between $X$ and $\mathcal{T}_X$, two novel phenomena in tree-shifts are also of interest. It is well-known that the set of topological entropy of SFTs is dense in the closed interval $[0, \log |\mathcal{A}|]$ (cf.~\cite{Des-IM2006} and the references therein) and the topological entropy of $X$ is attained by its maximal irreducible component (cf.~\cite{LM-1995}). Proposition \ref{prop:entropy_gap} specifies a gap in the set of topological entropy of hom tree-SFTs and Proposition \ref{prop:entropy_large_irred_component} addresses that $\mathcal{T}_X$ can have strictly larger topological entropy than that of any of its irreducible components in general.

\subsection{Notations and Definitions}
Let $d \geq 2$ and $\Sigma = \{0, 1, \ldots, d-1\}$. The $d$-tree $\Sigma^* = \cup_{n \geq 0} \Sigma^n$ is the set of all finite words on $\Sigma$ and is naturally visualized as the Cayley graph of the free semigroup on $d$ generators. The empty word $\epsilon$ is the only word of zero length and corresponds to the root of the tree and the identity element of the semigroup. Let $\mathcal{A}$ be a finite alphabet. A \emph{labeled tree} is a function $t: \Sigma^* \to \mathcal{A}$. For each node $w \in \Sigma^*$, $t_w := t(w)$ refers to the label attached to $w$. Denote by $\Delta_n := \cup_{i=0}^{n}\Sigma^i$ the initial subtree of the $d$-tree. An \emph{$n$-block} is a function $u : \Delta_n \rightarrow \mathcal{A}$. We say that a block $u$ appears in a labeled tree $t$ if there is a node $s \in \Sigma^*$ such that $t_{s w}=u_w$ for all $w \in \Delta_n$; otherwise, $t$ avoids $u$. A \emph{tree-shift} is a set $\treeshift{} \subseteq \mathcal{A}^{\Sigma^*}$ of labeled trees which avoid all of a certain set of forbidden blocks.

For each binary matrix $A$ indexed by $\mathcal{A}$, there is a \emph{Markov shift} $\mathsf{X}_A$ and a \emph{hom Markov tree-shift} $\mathcal{T}_A$ defined as
$$
\mathsf{X}_A = \{x \in \mathcal{A}^{\mathbb{N}}: A_{x_i, x_{i+1}} = 1 \text{ for all } i \in \mathbb{N}\}
$$
and
$$
\mathcal{T}_A = \{t \in \mathcal{A}^{\Sigma^*}: A_{t_w, t_{wi}} = 1 \text{ for all } w \in \Sigma^*, i \in \Sigma\},
$$
respectively. By writing $B_n (\mathsf{X}_A) := \{x_{[1,n]} \in \mathcal{A}^{n}: x \in \mathsf{X}_A\}$,
the set of admissible $n$-blocks of $\mathsf{X}_A$, the \emph{topological entropy of $\mathsf{X}_A$} is defined as
$$
h(\mathsf{X}_A) = \lim_{n \to \infty} \frac{\log |B_n(\mathsf{X}_A)|}{n},
$$
which measures the growth rate of admissible $n$-words concerning their support. Analogously, the set $B_n (\mathcal{T}_A)$ refers to the set of $n$-blocks appearing in $\mathcal{T}_A$ and \emph{the topological entropy of $\mathcal{T}_A$}
$$
h(\mathcal{T}_A) = \lim_{n \to \infty} \frac{\log |B_n(\mathcal{T}_A)|}{|\Delta_n|}
$$
is defined, where $\Delta_n = \cup_{i=0}^{n} \Sigma^i$.

\section{Hom Tree-Shift of Finite Type with Larger Topological Entropy}

Suppose $A$ is a $k$-by-$k$ binary matrix indexed by the alphabet $\mathcal{A}=\{1,2,\ldots,k\}$. Although $|\Delta_n| \gg n$, Peteren and Salama \cite{PS-TCS2018} revealed the quite interesting property that $h(\mathsf{X}_A) \leq h(\mathcal{T}_A)$. It is of interest whether there is a criterion for the equality. Let
$$
M:=\max_{i} \sum_j A_{i,j}
\quad \text{and} \quad
m:= \min_{i} \sum_j A_{i,j}
$$
be the maximal and minimal row sum of $A$, respectively. Theorem \ref{thm:irr_H_condition} yields a necessary and sufficient condition for determining when the equality holds.

\begin{theorem} \label{thm:irr_H_condition}
Suppose $\mathcal{T}_A$ is a hom Markov tree-shift induced by $A$ and $A$ is irreducible. Then, $h(\mathsf{X}_A)=h(\mathcal{T}_A)$ if and only if $M=m$.
\end{theorem}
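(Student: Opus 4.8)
The plan is to compute both entropies in terms of the row sums of $A$ and compare. For the $\mathbb{Z}$-shift, $h(\mathsf{X}_A)=\log\lambda_A$, where $\lambda_A$ is the Perron eigenvalue of $A$; since $A$ is irreducible this is the spectral radius, and it satisfies $m\le\lambda_A\le M$, with equality at either end if and only if $A$ has a constant row sum, i.e.\ $M=m$. For the tree-shift, I would first establish a recursive estimate for $|B_n(\mathcal{T}_A)|$. Writing $c_n(a)$ for the number of $n$-blocks of $\mathcal{T}_A$ whose root carries symbol $a$, the hom (nearest-neighbor, symmetric) structure gives the exact recursion $c_{n+1}(a)=\bigl(\sum_{b:A_{a,b}=1} c_n(b)\bigr)^{d}$, because the $d$ subtrees hanging off the root are filled independently subject only to the edge constraint from the root. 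Hence $|B_n(\mathcal{T}_A)|=\sum_a c_n(a)$, and everything is controlled by how the vector $(c_n(a))_a$ grows under the operator $v\mapsto (Av)^{\odot d}$ (coordinatewise $d$-th power).

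Next I would extract the growth rate. Taking logarithms and normalizing by $|\Delta_n|=\tfrac{d^{n+1}-1}{d-1}$, note $|\Delta_n|\sim \tfrac{d}{d-1}\,d^{n}$ and $|\Delta_{n+1}|=d|\Delta_n|+1$. From $c_{n+1}(a)=(\sum_b A_{a,b}c_n(b))^d$ one gets the two-sided bound
$$
\bigl(m\cdot \min_b c_n(b)\bigr)^{d}\le c_{n+1}(a)\le \bigl(M\cdot \max_b c_n(b)\bigr)^{d},
$$
so setting $\mu_n=\max_a c_n(a)$ and $\nu_n=\min_a c_n(a)$ we have $\nu_{n+1}\ge (m\nu_n)^d$ and $\mu_{n+1}\le (M\mu_n)^d$. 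Iterating the upper bound yields $\log\mu_n\le d^{n}\log\mu_0 + (d^{n-1}+d^{n-2}\cdots+1)\,d\log M$, hence
$$
\limsup_n\frac{\log\mu_n}{d^{n}}\le \frac{d}{d-1}\log M,
$$
and dividing by $|\Delta_n|\sim\frac{d}{d-1}d^n$ gives $h(\mathcal{T}_A)\le\log M$. Symmetrically the lower bound gives $h(\mathcal{T}_A)\ge\log m$; in fact a cleaner argument shows $h(\mathcal{T}_A)=\log M$ always, by observing that the coordinate achieving the maximum row sum dominates after one step under irreducibility (every symbol can reach, within a bounded number of steps, the symbol with row sum $M$, which then feeds the $d$-th-power recursion). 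I will need irreducibility precisely here, to rule out the maximal row sum being "hidden" in a part of the graph not reachable from the chosen root.

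Finally I would combine the pieces. If $M=m$ then $\lambda_A=M$, so $h(\mathsf{X}_A)=\log M=h(\mathcal{T}_A)$. Conversely, suppose $M>m$. Then the Perron value satisfies $\lambda_A<M$ strictly: an irreducible nonnegative matrix with non-constant row sums has spectral radius strictly below its maximal row sum (apply the standard bound $\lambda_A=\max_{v>0}\min_a \tfrac{(Av)_a}{v_a}$ and note equality in $m\le\lambda_A\le M$ forces the row sums of $A$ to be constant along the support of the Perron eigenvector, which by irreducibility is all of $\mathcal{A}$). Hence $h(\mathsf{X}_A)=\log\lambda_A<\log M=h(\mathcal{T}_A)$, so the equality fails. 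The main obstacle I anticipate is the clean identification $h(\mathcal{T}_A)=\log M$: one must make sure the $d$-th-power recursion genuinely forces the largest row sum to govern the asymptotics (not some weighted average, as happens for $d=1$), and that irreducibility is invoked correctly so that the block count started from \emph{any} root symbol exhibits the same growth rate $d^n\cdot\frac{d}{d-1}\log M$.
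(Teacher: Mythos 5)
Your forward direction is fine: when $M=m$ the recursion $c_{n+1}(a)=\bigl(\sum_b A_{a,b}c_n(b)\bigr)^d$ gives $c_n(a)=M^{d+d^2+\cdots+d^n}$ exactly, so $h(\mathcal{T}_A)=\log M=\log\lambda_A=h(\mathsf{X}_A)$; this is precisely the paper's argument. The converse, however, rests on the claim that $h(\mathcal{T}_A)=\log M$ always, and that claim is false. Take the golden mean matrix $A'=\begin{pmatrix}1&1\\1&0\end{pmatrix}$ on the binary tree: here $M=2$, but $h(\mathcal{T}_{A'})=\inf_n\frac{\log|B_n(\mathcal{T}_{A'})|}{|\Delta_n|}\le\frac{\log 5}{3}<\log 2$ (the paper itself uses this in Proposition \ref{prop:entropy_large_irred_component}). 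The heuristic that ``the coordinate with maximal row sum dominates after one step'' breaks down because in the recursion the symbol with the largest row sum sums over more children, but those children carry \emph{smaller} counts; for the golden mean shift the ratio $r_n=c_n(2)/c_n(1)$ converges to the root $r^*\in(0,1)$ of $r(1+r)^2=1$, and the growth exponent is $\sum_{j\ge 0}d^{-j}\log(1+r_j)<\frac{d}{d-1}\log 2$, strictly below what $\log M$ would require. So your only valid conclusions are the two-sided bounds $\log m\le h(\mathcal{T}_A)\le\log M$, which do not separate $h(\mathcal{T}_A)$ from $\log\lambda_A$ when $M>m$.

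What is actually needed in the converse is the strict inequality $h(\mathcal{T}_A)>\log\rho(A)$, and the paper obtains it by a considerably more delicate argument: it tracks the weighted sum $\sum_i x_i(n)v_i$ with $v$ the left Perron eigenvector, uses a quantitative strict form of the power-mean (H\"older) inequality (Lemma \ref{lem:holder_strict_inequality}) to show that each step multiplies this quantity by at least $\gamma(n)\cdot\rho(A)^d$ with $\gamma(n)\ge C((1+\varepsilon)^{1/(k-1)})>1$ whenever the components of $Ax(n)$ are spread by a factor $1+\varepsilon$, and proves (Lemma \ref{lem:universal_gap_property}) that such spread recurs on a set of times of positive lower Banach density precisely because $M>m$ keeps forcing the components apart. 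Accumulating these gains over $\Delta_n$ yields $h(\mathcal{T}_A)>\log\rho(A)=h(\mathsf{X}_A)$. Your proposal contains no mechanism playing this role, so the converse direction has a genuine gap.
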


In order to prove the theorem, the following two lemmas illustrate essential mechanisms for estimating the topological entropy of $\mathcal{T}_A$. For simplicity of notation, we denote by $\mathbf{x}(n) = (x_1(n), x_2(n), \ldots, x_k(n)) \in \mathbb{N}^k$ the vector of the numbers of patterns such that $x_i(n)=\norm{\{u \in B_n (\mathcal{T}_{A}): u_\epsilon=i\}}$. It is noteworthy that under the assumption that $A$ has no zero rows or zero columns (which is obviously valid if $A$ is irreducible), this vector $\mathbf{x}(n)$ can be iteratively computed as 
\begin{equation} \label{eq:iteration}
    x_i(n+1)=(A \mathbf{x}(n))_i^d:=\left(\sum_{j=1}^k A_{i,j} x_j(n)\right)^d
\end{equation}
with the initial condition $\mathbf{x}(0)=(1,1,\ldots,1)$. The following lemmas demonstrate that the condition $M>m$ yields an important subset of nonnegative integers $\Zint_+$ having positive lower Banach density. For nonnegative integers $n_1 \leq n_2$, denote by
$$
[n_1, n_2] = \{n \in \mathbb{Z}_+: n_1 \leq n \leq n_2\}
$$
the set of integers between $n_1$ and $n_2$.

\begin{lemma} \label{lem:universal_gap_property}
Suppose $A \in \{0,1\}^{k \times k}$ is an irreducible matrix with $M > m$, and for each $\delta \in (1,(\frac{M}{m})^{\frac{d}{d+1}})$ let 
$$
S=S(\delta):=\{n \in \mathbb{Z}_+: \max_{i_1,i_2} \frac{x_{i_1}(n)}{ x_{i_2}(n)} > \delta\}.
$$
Then, there exists $N \in \mathbb{N}$ such that $[n, n+N] \cap S \neq \varnothing$ for every $n \ge 0$.
\end{lemma}
\begin{proof}
Since $\delta \in (1,(\frac{M}{m})^{\frac{d}{d+1}})$, there exists $N \in \mathbb{N}$ such that $\left(\frac{M}{m} \delta^{-\frac{d+1}{d}}\right)^{N d} > \delta^2$. We prove this $N$ satisfy the requirement by contradiction. Suppose there exists $n \in \mathbb{N}$ such that $[n,n+N] \subset \mathbb{N} \setminus S$, i.e., 
\begin{equation*}
\max_{i_1,i_2} \frac{x_{i_1}(m)}{x_{i_2}(m)} = \frac{\max_{i} x_{i}(m)}{\min_{i} x_{i}(m)} \le \delta, \forall m \in [n,n+N].
\end{equation*}
Suppose $i_1^*, i_2^* \in \mathcal{A}$ such that $M = \sum_{j} A_{i_1^*,j}$ and $m = \sum_{j} A_{i_2^*,j}$. Then, applying \eqref{eq:iteration} yields
\begin{align*}
& \quad \frac{x_{i_1^*}(n+N)}{x_{i_2^*}(n+N)} = \left( \frac{\sum_{j=1}^k A_{i_1^*,j} x_{j}(n+N-1)}{\sum_{j=1}^k A_{i_2^*,j} x_{j}(n+N-1)} \right)^d \\
& \ge \left(\frac{M \cdot \min_{i} x_{i}(n+N-1)}{m \cdot \max_{i} x_{i}(n+N-1)}\right)^d \\
& = \left(\frac{M}{m}\right)^d \left(\frac{\min_{i} x_{i}(n+N-1)}{\max_{i} x_{i}(n+N-1)}\right)^{d-1} \left(\frac{\min_{i} x_{i}(n+N-1)}{\max_{i} x_{i}(n+N-1)}\right) \\
& \ge \left(\frac{M}{m}\right)^d \frac{1}{\delta^{d-1}} \left(\frac{\min_{i} x_{i}(n+N-1)}{x_{i_1^*}(n+N-1)}\right) \left(\frac{x_{i_2^*}(n+N-1)}{\max_{i} x_{i}(n+N-1)}\right) \left(\frac{x_{i_1^*}(n+N-1)}{x_{i_2^*}(n+N-1)}\right) \\
& \ge \left(\frac{M}{m} \delta^{-\frac{d+1}{d}}\right)^d \frac{x_{i_1^*}(n+N-1)}{x_{i_2^*}(n+N-1)} \\
& \ge \left(\frac{M}{m} \delta^{- \frac{d+1}{d}}\right)^{N d} \frac{x_{i_1^*}(n)}{x_{i_2^*}(n)} \ge \left(\frac{M}{m} \delta^{- \frac{d+1}{d}}\right)^{N d} \frac{1}{\delta} > \delta
\end{align*}
which is a contradiction. The proof is thus complete.
\end{proof}

\begin{remark} \label{rmk:pigeonhole}
For every $\delta \in (1,(\frac{M}{m})^{\frac{d}{d+1}})$ and $n \in S(\delta)$, express the components of $\mathbf{x}(n)$ in ascending order as
$$
x_{i_1}(n) \leq x_{i_2}(n) \leq \ldots \leq x_{i_k}(n).
$$
The pigeonhole principle indicates that there exists $1 \leq l \leq k-1$ such that $\frac{x_{i_{l+1}}(n)}{x_{i_{l}}(n)} > \delta^{\frac{1}{k-1}}$, or equivalently, $\frac{(A \mathbf{x}(n-1))_{i_{l+1}}}{(A \mathbf{x}(n-1))_{i_{l}}} > \delta^{\frac{1}{d (k-1)}}$.
\end{remark}

\begin{lemma} \label{lem:holder_strict_inequality}
Suppose $\{v_i\}_{i=1}^k \subset (0,1)$ is given such that $\sum_{i=1}^k v_i = 1$. There exists a strictly increasing function $C:[1, \infty) \to [1, \infty)$ satisfying the following:
\begin{enumerate}
    \item $C(1)=1$,
    \item $\frac{\sum_{i=1}^k w_i a_i^d}{(\sum_{i=1}^k w_i a_i)^d} \ge C(\frac{a_{l+1}}{a_l}) \ge 1$ for all $1 \le l \le k-1$ and $0 < a_1 \le a_2 \le \ldots \le a_k$,
\end{enumerate}
where $\{w_i\}_{i=1}^k$ is an arbitrary permutation of $\{v_i\}_{i=1}^k$.
\end{lemma}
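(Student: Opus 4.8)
The plan is to construct $C$ explicitly as a minimum of elementary two-variable ratios and then verify the required properties by a short calculus argument. The first step is to reduce the $k$-point inequality to a two-point one. Given $0 < a_1 \le \cdots \le a_k$, a permutation $\{w_i\}$ of $\{v_i\}$, and an index $1 \le l \le k-1$, split $\{1,\dots,k\}$ into the low block $L = \{1,\dots,l\}$ and the high block $H = \{l+1,\dots,k\}$; set $p := \sum_{i\in L} w_i$ and $q := 1-p = \sum_{i\in H} w_i$; and let $X := p^{-1}\sum_{i\in L} a_i w_i$ and $Y := q^{-1}\sum_{i\in H} a_i w_i$ be the respective weighted averages. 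Then $0 < X \le a_l \le a_{l+1} \le Y$, so $Y/X \ge a_{l+1}/a_l$, and Jensen's inequality for the convex function $x\mapsto x^d$ applied within each block gives $\sum_{i\in L} a_i^d w_i \ge p X^d$ and $\sum_{i\in H} a_i^d w_i \ge q Y^d$. Hence
$$
\frac{\sum_{i=1}^k a_i^d w_i}{\bigl(\sum_{i=1}^k a_i w_i\bigr)^d}\;\ge\;\frac{pX^d+qY^d}{(pX+qY)^d}\;=\;\psi\!\Bigl(\frac{Y}{X},\,p\Bigr),
\qquad
\psi(t,p):=\frac{p+(1-p)t^d}{\bigl(p+(1-p)t\bigr)^d}.
$$
Since $k\ge 2$ and both $L$ and $H$ are nonempty while $\{w_i\}$ is a permutation of $\{v_i\}$, the mass $p$ always lies in the fixed compact interval $I:=[v_*,\,1-v_*]$ with $v_*:=\min_i v_i\in(0,\tfrac12]$, independently of $a$ and $l$. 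I would then define
$$
C(r):=\min_{p\in I}\psi(r,p),\qquad r\ge 1,
$$
which is a well-defined positive number because $\psi(r,\cdot)$ is continuous and positive on the compact set $I$.

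Next I would record two elementary facts about $\psi$. First, Jensen's inequality gives $\psi(t,p)\ge 1$ for $t\ge 1$ with equality at $t=1$; hence $C(r)\ge 1$ for $r\ge 1$ and $C(1)=1$, which is property (1) and shows that $C$ takes values in $[1,\infty)$. Second, with $q=1-p$ a direct computation gives
$$
\frac{\partial}{\partial t}\log\psi(t,p)\;=\;\frac{d\,p\,q\,(t^{d-1}-1)}{\bigl(p+qt^d\bigr)\bigl(p+qt\bigr)},
$$
which is nonnegative for $t\ge 1$ and strictly positive for $t>1$ (here $d\ge 2$ and $p,q>0$ on $I$ are used), so $\psi(\cdot,p)$ is nondecreasing on $[1,\infty)$ and strictly increasing there. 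Because $Y/X\ge a_{l+1}/a_l\ge 1$, monotonicity of $\psi(\cdot,p)$ upgrades the displayed estimate to
$$
\frac{\sum_{i=1}^k a_i^d w_i}{\bigl(\sum_{i=1}^k a_i w_i\bigr)^d}\;\ge\;\psi\!\Bigl(\frac{a_{l+1}}{a_l},\,p\Bigr)\;\ge\;\min_{p'\in I}\psi\!\Bigl(\frac{a_{l+1}}{a_l},\,p'\Bigr)\;=\;C\!\Bigl(\frac{a_{l+1}}{a_l}\Bigr)\;\ge\;1,
$$
which is property (2).

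It then remains to show that $C$ is strictly increasing, which is the standard ``minimum of strictly increasing functions'' argument: for $1\le r_1<r_2$ pick $p_2\in I$ attaining $C(r_2)$ (it exists by compactness of $I$ and continuity of $\psi(r_2,\cdot)$); then $C(r_2)=\psi(r_2,p_2)>\psi(r_1,p_2)\ge\min_{p\in I}\psi(r_1,p)=C(r_1)$, the strict step being exactly the strict monotonicity of $\psi(\cdot,p_2)$ on $[1,\infty)$ recorded above. The one place where care is needed is the two-point reduction itself: one must apply Jensen to genuine probability weights $w_i/p$ and $w_i/q$, which forces $p,q>0$ and is where nonemptiness of $L$ and $H$ enters, and one must check that the admissible range of $p$ is the \emph{fixed} interval $I=[v_*,1-v_*]$ depending only on $\{v_i\}$, so that $C$ is a function of $\{v_i\}$ alone rather than of the configuration $(a,l)$. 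Everything else is elementary — the sign of a single logarithmic derivative, Jensen's inequality, and a compactness argument for the existence of the minimum — so I do not anticipate further obstacles.
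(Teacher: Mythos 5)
Your proof is correct, and it arrives at exactly the same two-point comparison function as the paper: your $\psi(x,p)$ with $p=w_1+\cdots+w_l$ is precisely the paper's $C_{l,\mathbf{w}}(x)$. The difference is in how the $k$-point ratio is reduced to that two-point ratio, and in how the final minimum is taken. The paper merges the extreme points iteratively, using the sign of $\partial(J_{\mathbf{w}}/I_{\mathbf{w}}^d)/\partial a_j$ at $j=1$ and $j=k$ to replace $a_k$ by $a_{k-1}$ and $a_1$ by $a_2$, repeating until only $a_l$ and $a_{l+1}$ remain with the block masses as weights. You instead collapse each block in one step by Jensen's inequality applied to the normalized weights $w_i/p$ and $w_i/q$, landing at the averages $X\le a_l$ and $Y\ge a_{l+1}$, and then use the monotonicity of $\psi(\cdot,p)$ on $[1,\infty)$ to pass from $Y/X$ down to $a_{l+1}/a_l$; this is arguably cleaner, and it also sidesteps a sign slip in the paper's prose (the derivative is in fact non-negative at $j=k$ and non-positive at $j=1$, the reverse of what is stated there, though the paper's displayed merging inequalities are the correct ones). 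Finally, the paper takes an infimum over the finitely many pairs $(l,\mathbf{w})$, for which strict monotonicity of the infimum is immediate, whereas you minimize over the whole interval $[v_*,1-v_*]$ of possible block masses; this gives a possibly smaller but still valid $C$, at the price of the compactness-plus-continuity argument you supply for strict monotonicity of the minimum. Both versions establish the lemma.
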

\begin{proof}
Suppose $\mathbf{w} = \{w_i\}_{i=1}^k$ is a permutation of $\{v_i\}_{i=1}^k$. Consider the functions $J_{\mathbf{w}}(t_1, t_2, \ldots, t_k) := \sum_{i=1}^k w_i t_i^d$, $I_{\mathbf{w}}(t_1, t_2, \ldots, t_k) :=\sum_{i=1}^k w_i t_i$, and
$$
f_{\mathbf{w}}(t_1, t_2, \ldots, t_k):=\frac{J_{\mathbf{w}}(t_1, t_2, \ldots, t_k)}{(I_{\mathbf{w}}(t_1, t_2, \ldots, t_k))^d}.
$$
By H\"{o}lder's inequality with $p = d$, $q = d/(d-1)$, $w_i^{1/d}a_i$ for the first sequence and $w_i^{(d-1)/d}$ for the second, we have
$$
f_{\mathbf{w}}(a_1,a_2,\ldots,a_k) = \frac{\sum_{i=1}^k w_i a_i^d}{(\sum_{i=1}^k w_i a_i)^d} \ge \frac{\sum_{i=1}^k w_i a_i^d}{(\sum_{i=1}^k (w_i^{\frac{1}{d}} a_i)^d)(\sum_{i=1}^k (w_i^{\frac{d-1}{d}})^{\frac{d}{d-1}})^{d-1}} = 1.
$$
Note that the partial differentiation of $f_{\mathbf{w}}$ with respect to $t_j$ is
\[
\frac{\partial f_{\mathbf{w}}}{\partial t_j} (a_1,a_2,\ldots,a_k) = \frac{d \cdot I_{\mathbf{w}}^{d-1} w_j \cdot \sum_{i=1}^k w_i a_i (a_j^{d-1} - a_i^{d-1})}{I_{\mathbf{w}}^{2 d}}.
\] 
As a consequence, 
\[
\frac{\partial f_{\mathbf{w}}}{\partial t_k} (a_1,a_2,\ldots,t) \ge 0, \forall t \in (a_{k-1}, a_k),
\]
and
\[
\frac{\partial f_{\mathbf{w}}}{\partial t_1} (t,a_2,\ldots,a_k) \le 0, \forall t \in (a_1, a_2).
\]
Hence, we apply the mean value theorem to obtain
\begin{equation} \label{eq:1}
\begin{aligned}
f_{\mathbf{w}}(a_1,a_2,\ldots,a_{k-1},a_k) & \ge f_{\mathbf{w}}(a_1,a_2,\ldots,a_{k-1},a_{k-1}) \\
& = f_{w_1,w_2,\ldots,w_{k-1}+w_k}(a_1,a_2,\ldots,a_{k-1}),
\end{aligned}
\end{equation}
and
\begin{equation} \label{eq:2}
f_{\mathbf{w}}(a_1,a_2,\ldots,a_{k-1},a_k) \ge f_{w_1+w_2,\ldots,w_{k-1},w_k}(a_2,\ldots,a_{k-1},a_k).
\end{equation}
Inductively, we derive from \eqref{eq:1} and \eqref{eq:2} that
\begin{align*}
f_{\mathbf{w}}(a_1,a_2,\ldots,a_{k-1},a_k) & \ge f_{w_1+\ldots+w_l,w_{l+1}+\ldots+w_{k}}(a_l,a_{l+1})\\
& = \frac{(w_1 + w_2 + \ldots + w_l) a_l^d + (w_{l+1} + w_{l+2} + \ldots w_k) a_{l+1}^d}{((w_1 + w_2 + \ldots + w_l) a_l + (w_{l+1} + w_{l+2} + \ldots w_k) a_{l+1})^d}
\end{align*}
for $1 \leq l \leq k-1$. Define
\begin{equation*}
C_{l,\mathbf{w}}(t):=\frac{(w_1 + w_2 + \ldots + w_l) + (w_{l+1} + w_{l+2} + \ldots w_k) t^d}{((w_1 + w_2 + \ldots + w_l) + (w_{l+1} + w_{l+2} + \ldots w_k) t)^d}.
\end{equation*}
Note that $C_{l, \mathbf{w}}(t)$ is strictly increasing on $[1,\infty)$ and such that
\begin{equation*}
C_{l,\mathbf{w}}(1)=1,
\end{equation*}
and
\begin{equation*}
f_{\mathbf{w}}(a_1,a_2,\ldots,a_k) \ge C_{l,\mathbf{w}}(\frac{a_{l+1}}{a_l}), \forall 1 \le l \le k-1.
\end{equation*}
The proof is thus complete by defining the function $C(t)$ as
\begin{equation*}
C(t):=\inf_{l,\mathbf{w}} C_{l,\mathbf{w}}(t).
\end{equation*}
\end{proof}

With the introduction of Lemmas \ref{lem:universal_gap_property} and \ref{lem:holder_strict_inequality}, we are in a position of proving Theorem \ref{thm:irr_H_condition}.

\begin{proof}[Proof of Theorem \ref{thm:irr_H_condition}]
Suppose $M = m$. It follows that the spectral radius of $A$ is $\rho(A) = M$ and $x_i(n) = M^{d^1+\ldots+d^n}$ for all $i$ and $n \in \Nint$. Hence,
$$
h(\mathcal{T}_A) = \lim_{n \to \infty} \frac{\log (k M^{d + d^2 + \cdots + d^n})}{|\Delta_n|} = \log M = h(\mathsf{X}_A).
$$

Conversely, suppose $M > m$. Let $\delta \in (1,(\frac{M}{m})^{\frac{d}{d+1}})$ be fixed and let $S=S(\delta)$ be defined as in Lemma \ref{lem:universal_gap_property}. We then apply in the following a trick used in \cite[Theorem 3.3]{PS-TCS2018}. Since $A$ is irreducible, there exists a probability eigenvector $\mathbf{v}=(v_1, v_2, \ldots, v_k)$ such that $\mathbf{v}^T A = \rho(A) \mathbf{v}^T$ with $v_i > 0$ for all $1 \le i \le k$. According to Remark \ref{rmk:pigeonhole}, for every $n+1 \in S$, there exists $1 \leq l < k$ such that $\frac{(A \mathbf{x}(n-1))_{i_{l+1}}}{(A \mathbf{x}(n-1))_{i_l}} > \delta^{\frac{1}{d (k-1)}}$, and let $\mathbf{w}$ be the permutation $\{w_j\}_{j=1}^k=\{v_{i_j}\}_{j=1}^k$. It follows from Lemma \ref{lem:holder_strict_inequality} that there exists a strictly increasing function $C:[1,\infty) \to [1,\infty)$ with $C(1)=1$ satisfying
\begin{equation*}
    \frac{J_{\mathbf{w}}((A \mathbf{x}(n))_{i_1},(A \mathbf{x}(n))_{i_2},\ldots,(A \mathbf{x}(n))_{i_k})}{I_{\mathbf{w}}^d((A \mathbf{x}(n))_{i_1},(A \mathbf{x}(n))_{i_2},\ldots,(A \mathbf{x}(n))_{i_k})} \ge C\left(\frac{(A \mathbf{x}(n))_{i_{l+1}}}{(A \mathbf{x}(n))_{i_l}}\right) \ge C(\delta^{\frac{1}{d (k-1)}}).
\end{equation*}
By writing
\begin{equation*}
\gamma(n):= \begin{cases}
C(\delta^{\frac{1}{d (k-1)}}), & \text{if } n+1 \in S;\\
1 & \text{if } n+1 \notin S,
\end{cases}
\end{equation*} 
we deduce that 
\begin{equation*}
\frac{\sum_{i=1}^k v_i (A \mathbf{x}(n))_i^d}{\left(\sum_{i=1}^k v_i (A \mathbf{x}(n))_i \right)^d} = \frac{J_{\mathbf{w}}((A \mathbf{x}(n))_{i_1},(A \mathbf{x}(n))_{i_2},\ldots,(A \mathbf{x}(n))_{i_k})}{I_{\mathbf{w}}^d((A \mathbf{x}(n))_{i_1},(A \mathbf{x}(n))_{i_2},\ldots,(A \mathbf{x}(n))_{i_k})} \ge \gamma(n).
\end{equation*}
Furthermore,
\begin{align*}
& \hphantom{\ = } \sum_{i=1}^k v_i x_i(n) = \sum_{i=1}^k v_i (A \mathbf{x}(n-1))_i^d \\
& = \frac{\sum_{i=1}^k v_i (A \mathbf{x}(n-1))_i^d}{\left(\sum_{i=1}^k v_i (A \mathbf{x}(n-1))_i \right)^d} \cdot \rho(A)^d \cdot \left(\sum_{i=1}^k v_i x_i(n-1) \right)^d  \\
& \ge \gamma(n-1) \cdot\rho(A)^d \cdot \left(\sum_{i=1}^k v_i x_i(n-1) \right)^d \\
& \ge \gamma(n-1) \gamma(n-2)^{d} \cdot\rho(A)^{d+d^2} \cdot \left(\sum_{i=1}^k v_i x_i(n-2) \right)^{d^2} \\
& \ge \gamma(n-1) \gamma(n-2)^{d} \ldots \gamma(0)^{d^{n-1}}\rho(A)^{d+d^2+\cdots+d^n} \cdot \left(\sum_{i=1}^k v_i x_i(0)\right)^{d^n} \\
& = \gamma(n-1) \gamma(n-2)^{d} \ldots \gamma(0)^{d^{n-1}}\rho(A)^{d+d^2+\cdots+d^n}.
\end{align*}
Now that $|\Delta_n|=1+d+\ldots+d^n$, we take logarithm and divide both sides of the inequality above by $|\Delta_n|$ to derive
\begin{equation*}
\frac{\log \norm{B_n (\mathcal{T}_{\mathbf{A}})}}{|\Delta_n|} \ge \frac{|\Delta_n| \cdot \log\rho(A)}{|\Delta_n|} -\frac{\log\rho(A)}{|\Delta_n|} + \frac{\sum_{i \in S \cap [0,n]} d^{n-i}}{\sum_{0\le i \le n} d^i} \log C(\delta^{\frac{1}{d (k-1)}}).
\end{equation*}
By letting $n$ tend to inifinity on both sides we derive $h(\mathcal{T}_{A}) > h (X_A)$, since $S$ has positive lower Banach density.
\end{proof}

\section{Conjugacy Invariant of Topological Entropy of Tree-Shifts}

As an application of results of the previous section, we consider the topological conjugacy and topological entropy for hom tree-shifts of finite type, which are notions defined as follows. Shift spaces $X$ and $Y$ are topologically conjugate, denoted by $X \cong Y$, if there exists a one-to-one correspondence $\phi: X \to Y$ induced by a block map $\Phi: B_m(X) \to \mathcal{A}(Y)$ for some $m \in \mathbb{N}$, where $\mathcal{A}(Z)$ denotes the alphabet of a shift space $Z$. For $n \in \mathbb{N}$, the $n$th higher block shift of $X$ is
$$
X^{[n]} = \{(y_i)_{i \in \mathbb{N}} \in (\mathcal{A}(X)^{n})^{\mathbb{N}}: \text{ $\exists x \in X$ such that } y_i = x_i \cdots x_{i+n-1} \forall~i \in \mathbb{N}\}.
$$
It is easy to see that $X \cong X^{[n]}$ for all $n$ (cf.~\cite{LM-1995}). Similarly, two tree-shifts $\mathcal{T}$ and $\mathcal{S}$ are topologically conjugate if there is a one-to-one correspondence $\psi: \mathcal{T} \to \mathcal{S}$ induced by a block map $\Psi: B_m(\mathcal{T}) \to \mathcal{A}(\mathcal{S})$ for some $m \in \mathbb{N}$. Ban and Chang \cite{BC-TAMS2017} demonstrated that $\mathcal{T}^{[n]} \cong \mathcal{T}$ for each $n \in \mathbb{N}$, where
$$
\mathcal{T}^{[n]} = \{t' \in B_n(\mathcal{T})^{\Sigma^*}: \text{ there exists $t \in \mathcal{T}$ such that } t'_w = t_{w\Delta_n} \forall~w \in \Sigma^*\}
$$
is the $n$th higher block tree-shift of $\mathcal{T}$.

A subset $\mathbf{s}=\{s_i\}_{i \in  \mathbb{N}} \subset \Sigma^*$ is called a \emph{chain} if $s_1 = \epsilon$ and $s_{i+1} \in s_i \Sigma$ for all $i \geq 1$; in other words, $\mathbf{s}$ is an infinite path initiated at the root. Define $\pi_{\mathbf{s}}: \mathcal{A}^{\Sigma^*} \to \mathcal{A}^{\mathbb{N}}$ as $(\pi_{\mathbf{s}} t)_i = t_{s_i}$ for $i \in \mathbb{N}$. For each shift space $X \subseteq \mathcal{A}^{\mathbb{N}}$, the corresponding \emph{hom tree-shift} $\mathcal{T}_X$ is defined as
$$
\mathcal{T}_X = \{t \in \mathcal{A}^{\Sigma^*}: \pi_{\mathbf{s}} (t) \in X \text{ for every chain } \mathbf{s}\}.
$$
In particular, a hom tree-shift is called a \emph{hom tree-shift of finite type} (respectively, \emph{hom Markov tree-shift}) if $X$ is an SFT (respectively, nearest neighbor SFT). Although the topological entropy of tree-shifts is not conjugacy invariant in general (see \cite{Bowen-JAMS2010} for instance), Theorem \ref{thm:n_block_entropy_inv} indicates that it is invariant for hom tree higher block shifts. Notably, we call $\mathcal{T}_Y$ a \emph{hom tree higher block shift} if $Y = X^{[m]}$ is a higher block shift of some shift space $X$; a hom tree higher block shift may not be a higher block tree-shift generally.

\begin{theorem} \label{thm:n_block_entropy_inv}
Suppose $X$ is a shift space and $m \in \mathbb{N}$. Let $X^{[m]}$ be a higher block shift of $X$. Then $h(\mathcal{T}_X) = h(\mathcal{T}_{X^{[m]}})$.
\end{theorem}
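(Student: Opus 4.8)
The plan is to show directly that $|B_n(\mathcal{T}_{X^{[m]}})|$ and $|B_n(\mathcal{T}_X)|$ have the same exponential growth rate against $|\Delta_n|$; the point is that forming the $m$th higher block shift only introduces a ``tail'' of $m-1$ extra coordinates, which is negligible compared with $|\Delta_n|$. First I would record an elementary fact valid for any shift space $Y\subseteq\mathcal{A}^{\mathbb{N}}$: a labeling $u$ of $\Delta_n$ belongs to $B_n(\mathcal{T}_Y)$ if and only if every root-to-leaf path of $u$ spells a word of $B_{n+1}(Y)$. One implication is trivial; for the other, each leaf word lies in $B_{n+1}(Y)$ hence is a prefix of some right-infinite ray in $Y$, and these rays can be used to fill the $d$-ary subtrees hanging below the leaves of $\Delta_n$, producing a point of $\mathcal{T}_Y$ that contains $u$. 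I will use this characterization for $Y=X$, $Y=X^{[m]}$, and for the various subtrees appearing below.

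Next I would unfold the overlap relation defining $X^{[m]}$. Let $\Gamma_n$ be the finite tree obtained from $\Delta_n$ by attaching a path of $m-1$ further nodes above its root, so $|\Gamma_n|=|\Delta_n|+m-1$; realize $\Gamma_n$ inside $\Delta_{n+m-1}$ as the path $\epsilon,0,0^2,\dots,0^{m-1}$ together with the depth-$n$ subtree rooted at $0^{m-1}$. The standard higher block recoding then yields a bijection between $B_n(\mathcal{T}_{X^{[m]}})$ and the set $\mathcal{B}_n$ of labelings $v\colon\Gamma_n\to\mathcal{A}(X)$ all of whose (length $n+m$) root-to-leaf paths spell words of $B_{n+m}(X)$: for $u'\in B_n(\mathcal{T}_{X^{[m]}})$ one reads $u'_w\in B_m(X)$ off the length-$m$ segment of $\Gamma_n$ ending at the copy of $w$, the overlap condition is exactly what makes this assignment well defined, and the characterization above identifies the image as $\mathcal{B}_n$. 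So it suffices to show $\frac{1}{|\Delta_n|}\log|\mathcal{B}_n|\to h(\mathcal{T}_X)$.

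For the upper bound, restricting $v\in\mathcal{B}_n$ to the copy of $\Delta_n$ inside $\Gamma_n$ gives an element of $B_n(\mathcal{T}_X)$ by the characterization, and this restriction map has fibres of size at most $|\mathcal{A}(X)|^{m-1}$ (the labels on the $m-1$ tail nodes), so $|\mathcal{B}_n|\le|\mathcal{A}(X)|^{m-1}\,|B_n(\mathcal{T}_X)|$, giving $\limsup_n\frac{1}{|\Delta_n|}\log|\mathcal{B}_n|\le h(\mathcal{T}_X)$. For the lower bound I would go the other way: the restriction map $B_{n+m-1}(\mathcal{T}_X)\to\mathcal{B}_n$, sending a block to its restriction to $\Gamma_n\subseteq\Delta_{n+m-1}$, is surjective — given $v\in\mathcal{B}_n$, extend each of its tail prefixes rightward to a ray of $X$ and use these rays to fill the $d-1$ sibling subtrees hanging off each tail node; this uses only right-extendability, which always holds — and its fibres have size at most $\prod_{j=1}^{m-1}|B_{n+m-1-j}(\mathcal{T}_X)|^{\,d-1}$, one factor per sibling subtree. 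Hence
\[
|B_n(\mathcal{T}_{X^{[m]}})|=|\mathcal{B}_n|\ \ge\ \frac{|B_{n+m-1}(\mathcal{T}_X)|}{\prod_{j=1}^{m-1}|B_{n+m-1-j}(\mathcal{T}_X)|^{\,d-1}}.
\]
Taking logarithms, dividing by $|\Delta_n|$, and inserting $\log|B_N(\mathcal{T}_X)|=h(\mathcal{T}_X)|\Delta_N|+o(|\Delta_N|)$ together with the identity $|\Delta_{n+m-1}|-(d-1)\sum_{j=1}^{m-1}|\Delta_{n+m-1-j}|=|\Delta_n|+m-1$ (which merely says $\Gamma_n$ is $\Delta_{n+m-1}$ with the sibling subtrees removed), the right-hand side tends to $h(\mathcal{T}_X)$, so $\liminf_n\frac{1}{|\Delta_n|}\log|\mathcal{B}_n|\ge h(\mathcal{T}_X)$. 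Since $h(\mathcal{T}_{X^{[m]}})=\lim_n\frac{1}{|\Delta_n|}\log|B_n(\mathcal{T}_{X^{[m]}})|$ exists by Petersen--Salama, it equals $h(\mathcal{T}_X)$.

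The main obstacle is conceptual rather than computational: unlike for $\mathbb{Z}$-shifts, $\mathcal{T}_{X^{[m]}}$ is neither equal nor conjugate to $(\mathcal{T}_X)^{[m]}$, because a tree over $\mathcal{A}(X)$ cannot be recoded to one over $B_m(X)$ — the ``past'' of a node is a single word but its ``future'' branches — so the invariance cannot be read off from $\mathcal{T}^{[m]}\cong\mathcal{T}$. This is what forces the tail-unfolding and the use of the two asymmetric restriction maps above; in particular, filling sibling subtrees must only ever invoke right-extension, which is why the surjection runs from $B_{n+m-1}(\mathcal{T}_X)$ down to $\mathcal{B}_n$ and not the reverse. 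The only genuine computation is checking that the tail and the sibling subtrees contribute $o(|\Delta_n|)$ to the exponent, i.e.\ the cancellation in the displayed limit.
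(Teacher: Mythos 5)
Your argument is correct, and the two halves compare differently with the paper's own proof. The inequality $h(\mathcal{T}_{X^{[m]}})\le h(\mathcal{T}_X)$ is obtained in essentially the same way in both: your restriction of $v\in\mathcal{B}_n$ to the copy of $\Delta_n$ is, after your unfolding, exactly the paper's map $f_n$ projecting each $m$-block symbol onto its last letter, with the same fibre bound $|\mathcal{A}|^{m-1}$. For the reverse inequality the routes genuinely diverge. The paper fixes each of the $d^{m-1}$ words $\overline{w}\in\Sigma^{m-1}$ and reads off a block of $B_{n-m+1}(\mathcal{T}_{X^{[m]}})$ by sliding the $m$-window along paths beginning with $\overline{w}$; a block of $B_n(\mathcal{T}_X)$ is then determined by its restriction to $\Delta_{m-1}$ together with these $d^{m-1}$ images, giving $|B_n(\mathcal{T}_X)|\le|\mathcal{A}|^{|\Delta_{m-1}|}\,|B_{n-m+1}(\mathcal{T}_{X^{[m]}})|^{d^{m-1}}$ --- an injection into a product that needs no extension argument, only the observation that $|\Delta_n|$ is asymptotically $d^{m-1}|\Delta_{n-m+1}|$. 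You instead surject $B_{n+m-1}(\mathcal{T}_X)$ onto $\mathcal{B}_n$ with fibres controlled by the sibling subtrees, which forces you to state and prove the path characterization of $B_n(\mathcal{T}_Y)$ --- a fact the paper uses only implicitly --- and to verify the telescoping identity $|\Delta_{n+m-1}|-(d-1)\sum_{j=1}^{m-1}|\Delta_{n+m-1-j}|=|\Delta_n|+m-1$. Your version is somewhat longer but buys a cleaner conceptual picture (via the bijection $B_n(\mathcal{T}_{X^{[m]}})\cong\mathcal{B}_n$, the hom tree higher block shift is literally $\mathcal{T}_X$ with an $(m-1)$-tail grafted above the root) together with two-sided estimates relating $|B_n(\mathcal{T}_{X^{[m]}})|$ directly to block counts of $\mathcal{T}_X$; the paper's version is shorter and avoids extendability altogether, which in your argument rests on the (correctly used, but worth flagging) observation that for a one-sided shift space every word of $B(Y)$ is a prefix of a point of $Y$ because $\sigma(Y)\subseteq Y$. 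One small bonus of your write-up: the paper's displayed bound contains typos (a sum where a product of coordinates is meant, and $B_{n-m+1}(\mathcal{T}_X)$ where $B_{n-m+1}(\mathcal{T}_{X^{[m]}})$ is intended), whereas your inequalities are stated in the form actually needed.
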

\begin{proof}
Define $f: \mathcal{A}^m \to \mathcal{A}$ as $f(a_1, \ldots, a_m) = a_m$. Let $f_n: (\mathcal{A}^m)^{\Delta_n} \to \mathcal{A}^{\Delta_n}$ be defined as $(f_n(v))_w = f(v_w)$ for all $w \in \Delta_n$. We claim that $f_n(B_n(\mathcal{T}_{X^{[m]}})) \subseteq B_n(\mathcal{T}_X)$, and that $|B_n(\mathcal{T}_{X^{[m]}})| \le |\mathcal{A}|^{m-1} |B_n(\mathcal{T}_X)|$. 

Let $v \in B_n(\mathcal{T}_{X^{[m]}})$. Then, for all $s_1, \ldots, s_i \in \Sigma$, $v_\epsilon v_{s_1} v_{s_1 s_2} \ldots v_{s_1 s_2 \ldots s_i} \in B(X^{[m]})$ and $f(v_\epsilon) f(v_{s_1}) f(v_{s_1 s_2}) \ldots f(v_{s_1 s_2 \ldots s_i}) \in B(X)$. Therefore, $f_n(B_n(\mathcal{T}_{X^{[m]}})) \subseteq B_n(\mathcal{T}_X)$. Furthermore, if $v,v' \in B_n(\mathcal{T}_{X^{[m]}})$ satisfy $f_n(v)=f_n(v')$, then it is not hard to see $v=v'$ if and only if $v_{\epsilon}=(a_1 a_2 \ldots a_{m-1} a) = (b_1 b_2 \ldots b_{m-1} a)=v'_{\epsilon}$. Combining the above, we obtain $|B_n(\mathcal{T}_{X^{[m]}})| \le |\mathcal{A}|^{m-1} \cdot |B_n(\mathcal{T})|$ and 
\begin{align*}
    h(\mathcal{T}_{X^{[m]}}) & = \lim_{n \to \infty} \frac{\log |B_n(\mathcal{T}_{X^{[m]}})|}{|\Delta_n|} \\
    & \le \lim_{n \to \infty} \frac{(m-1) \log |\mathcal{A}| + \log |B_n(\mathcal{T}_X)|}{|\Delta_n|} = h(\mathcal{T}_X).
\end{align*}

On the other hand, for every $\overline{w} \in \Sigma^{m-1}$, define $g_{\overline{w},n}: B_n(\mathcal{T}_X) \to B_{n-m+1}(\mathcal{T}_{X^{[m]}})$ for $n \ge m$ as 
\[
    (g_{\overline{w},n}(u))_{w} = (u_{\overline{w}_i} \ldots u_{\overline{w}_{m-1}} u_{w_1} \ldots u_{w_i}),
\]
where $w=w_1 w_2 \ldots w_i \in \Sigma^i$ and $\overline{w}_0=\epsilon$. It is seen that 
\[
|B_n(\mathcal{T}_X)| \le {|\mathcal{A}|}^{|\Delta_{m-1}|} \sum_{\overline{w} \in \Sigma^{m-1}} |g_{\overline{w},n} (B_n(\mathcal{T}_X))| \le {|\mathcal{A}|}^{|\Delta_{m-1}|} {|B_{n-m+1}(\mathcal{T}_X)|}^{d^{m-1}}.
\]
Hence,
\begin{align*}
    h(\mathcal{T}_X) & = \lim_{n \to \infty} \frac{\log |B_n(\mathcal{T}_X)|}{|\Delta_n|} \\
    & \le \lim_{n \to \infty} \frac{|\Delta_{m-1}| \log |\mathcal{A}| + {d^{m-1}} \log |B_{n-m+1}(\mathcal{T}_X)|}{|\Delta_n|} = h(\mathcal{T}_{X^{[m]}}). 
\end{align*}
This finishes the proof.
\end{proof}

\begin{remark}\label{rmk:intro_HBS}
Petersen and Salama \cite{PS-DCDS2020} indicated that $h(\mathcal{T}_X) \geq h(X)$ for any shift space $X$. It is of our interest to study when the equality holds. Theorems \ref{thm:irr_H_condition} and \ref{thm:n_block_entropy_inv} provide a necessary and sufficient condition for the case where $X$ is an irreducible shift of finite type. More explicitly, each SFT $X$ is topologically conjugate with its higher block shift $X^{[n]}$ which is Markovian for some $n \in \mathbb{N}$ (cf.~\cite{LM-1995}). Let $A^{[n]}$ be the adjacency matrix of $X^{[n]}$. Theorem \ref{thm:irr_H_condition} indicates whether $h(\mathcal{T}_{X^{[n]}}) = h(X^{[n]})$.
\end{remark}

\begin{proposition} \label{prop:converse_conjugacy}
	Suppose $X$ and $Y$ are shift spaces. If $\mathcal{T}_{X} \cong \mathcal{T}_{Y}$, then $X \cong Y$. 
\end{proposition}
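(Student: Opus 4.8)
The plan is to recover the underlying shift space from its hom tree-shift, up to conjugacy, by exhibiting a canonical copy of $X$ inside $\mathcal{T}_X$ that any tree-conjugacy must respect, and then reading off the induced conjugacy. For a shift space $X$ over $\mathcal{A}$, define $\iota_X\colon X\to\mathcal{A}^{\Sigma^*}$ by $(\iota_X x)_w=x_{\norm{w}+1}$, so that $\iota_X x$ assigns the symbol $x_{n+1}$ to every node of depth $n$. Every chain $\mathbf{s}=\{s_i\}$ has $\norm{s_i}=i-1$, hence $\pi_{\mathbf{s}}(\iota_X x)=x$; thus $\iota_X$ is a continuous injection of $X$ into $\mathcal{T}_X$ whose image is exactly the set of \emph{level-homogeneous} trees
$$
\mathcal{H}_X:=\{t\in\mathcal{T}_X:\ t_w=t_{w'}\ \text{whenever}\ \norm{w}=\norm{w'}\},
$$
and on $\mathcal{H}_X$ the inverse $\iota_X^{-1}$ is simply projection along any chain. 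Since $\mathcal{H}_X$ is described purely through the tree structure, it is intrinsic to $\mathcal{T}_X$.

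First I would show that a tree-conjugacy preserves level-homogeneity. Let $\psi\colon\mathcal{T}_X\to\mathcal{T}_Y$ be induced by a block map $\Psi\colon B_m(\mathcal{T}_X)\to\mathcal{A}(\mathcal{T}_Y)$, so that $(\psi t)_w=\Psi(t|_{w\Delta_m})$ with one and the same local rule applied toward the leaves at every node. If $t\in\mathcal{H}_X$, then the depth-$m$ block $t|_{w\Delta_m}$ sends each $v\in\Delta_m$ to $x_{\norm{w}+\norm{v}+1}$, so it depends only on $\norm{w}$; hence $(\psi t)_w$ depends only on $\norm{w}$, i.e.\ $\psi t\in\mathcal{H}_Y$. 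Because a bijective block map of tree-shifts has a block-map inverse (a Curtis--Hedlund--Lyndon-type fact), the same argument applied to $\psi^{-1}$ gives $\psi^{-1}(\mathcal{H}_Y)\subseteq\mathcal{H}_X$, whence $\psi(\mathcal{H}_X)=\mathcal{H}_Y$.

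Then I would transport $\psi$ to $\phi:=\iota_Y^{-1}\circ\psi\circ\iota_X\colon X\to Y$, a bijection with inverse $\iota_X^{-1}\circ\psi^{-1}\circ\iota_Y$. Evaluating along the chain $s_i=0^{i-1}$ gives $\phi(x)_i=(\psi\iota_X x)_{s_i}=\Psi\bigl((\iota_X x)|_{s_i\Delta_m}\bigr)$, and $(\iota_X x)|_{s_i\Delta_m}$ is the block $v\mapsto x_{i+\norm{v}}$; hence $\phi(x)_i$ is obtained from $x_i x_{i+1}\cdots x_{i+m}$ through a fixed rule, so $\phi$ is induced by a block map of window $m+1$, and the symmetric computation handles $\phi^{-1}$. (Shift-equivariance is then automatic, or follows from $\psi\circ\partial_i=\partial_i\circ\psi$ for the child-subtree maps $(\partial_i t)_w=t_{iw}$ and $\partial_i\circ\iota_X=\iota_X\circ\sigma$.) Hence $\phi$ witnesses $X\cong Y$.

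The main obstacle, and the genuine content of the statement, is the preservation of level-homogeneity in the second step: it relies precisely on a block map of a tree-shift being a single finite rule read off toward the leaves, independent of where the node sits --- if conjugacies were allowed to act differently at different vertices, level-homogeneity need not be preserved. A routine but necessary auxiliary point is checking that $\mathcal{H}_X=\iota_X(X)$ exactly, so that $\iota_X$ is a homeomorphism onto $\mathcal{H}_X$ and $\phi$ is well defined.
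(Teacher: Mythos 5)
Your proposal is correct and takes essentially the same route as the paper: the paper's proof likewise identifies the level-homogeneous trees (those constant on each $\Sigma^k$) as the canonical copy of $X$ inside $\mathcal{T}_X$, shows via the sliding-block-code property that a tree-conjugacy preserves this set, and then defines the induced conjugacy $\Phi'(u):=\Phi(\overline{u})$ by restricting the tree block map to level-homogeneous blocks, exactly as in your computation of $\phi(x)_i$. Your write-up is, if anything, more explicit about the identification $\mathcal{H}_X=\iota_X(X)$ and the window size of the induced code.
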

\begin{proof}
	Let $\phi: X \to Y$ and $\psi: Y \to X$ be maps induced by the block maps $\Phi:B_n(\mathcal{T}_{X}) \to \mathcal{A}(\mathcal{T}_{Y})$, $\Psi:B_m(\mathcal{T}_{Y}) \to \mathcal{A}(\mathcal{T}_{X})$ respectively, satisfying $\psi \circ \phi = Id_{\mathcal{T}_{X}}$ and $\psi \circ \phi = Id_{\mathcal{T}_{Y}}$. We first claim that if $t \in \mathcal{T}_{X}$ such that $t_w = t_s$ for all $w,s \in \Sigma^k$ and for all $k \ge 0$, then $(\phi(t))_{w} = (\phi(t))_{s}$ for all $w,s \in \Sigma^k$ and for all $k \ge 0$. Indeed, it follows from the fact $\phi$ is a sliding block code that
	\begin{equation*}
		(\phi(t))_w = (\sigma_w \phi(t))_\epsilon = (\phi(\sigma_w t))_\epsilon = (\phi(\sigma_s t))_\epsilon = (\sigma_s \phi(t))_\epsilon = (\phi(t))_s,
	\end{equation*} 
	where $\sigma_w(t)$, $w \in \Sigma^*$, denotes the shifted labeled tree defined as $(\sigma_w(t))_{s}=t_{ws}$ for every $s \in \Sigma^*$. Note that this property is also shared by $\psi$ since it is also a sliding block code.
	
	Now suppose $u=(u_0 u_1 \cdots u_n) \in B_{n+1}(X)$ (respectively, $B_{m+1}(Y)$). We denote by $\overline{u} \in B_n(\mathcal{T}_{X})$ (respectively, $B_m(X)$) the block such that $\overline{u}_w := u_{\norm{w}}$ for all $w \in \Delta_n$. We define block maps $\Phi':B_{n+1}(X) \to \mathcal{A}(Y)$, $\Psi':B_{m+1}(Y) \to \mathcal{A}(X)$ on $X$ and $Y$ respectively as
	\begin{align*}
	\Phi'(u):=\Phi(\overline{u}),
	\end{align*}
	and
	\begin{align*}
	\Psi'(u):=\Psi(\overline{u}).
	\end{align*}
	It follows from our claim that the induced sliding block code $\phi':X \to Y$ and $\psi':Y \to X$ of $\Phi'$ and $\Psi'$ are well-defined, both injective, $\psi \circ \phi = Id_X$ and $\phi \circ \psi = Id_Y$. This finishes the proof.
\end{proof}

Proposition \ref{prop:converse_conjugacy} demonstrates that $\mathcal{T}_X \cong \mathcal{T}_Y$ is followed by $X \cong Y$. The next proposition indicates further that, not only the inverse is false in general, but there is a Markov shift such that its associated hom tree-shift is not topologically conjugate to its hom tree higher block shift. To prove the proposition, we exploit the irreducibity of a tree-shift. More specifically, a tree-shift $\mathcal{T}$ is said to be \emph{irreducible} if for all $n$-blocks $u$ and $v$ of $\mathcal{T}$, there exists $t \in \mathcal{T}$ and $w \in \Sigma^\ast \setminus \Delta_n$ such that $t|_{\Delta_n}=u$ and that $(\sigma_w t)|_{\Delta_n}=v$. It is not hard to see that irreducibility is a conjugacy-invariant property for tree-shifts.

\begin{proposition} \label{prop:homTSFT-higher-block-not-conjugate}
There exist an SFT $X$ and $m \in \mathbb{N}$ such that $ \mathcal{T}_X \ncong  \mathcal{T}_{X^{[m]}}$.
\end{proposition}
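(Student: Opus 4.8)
The plan is to find such $X$ and $m$ by exhibiting a conjugacy invariant of tree‑shifts on which $\mathcal{T}_X$ and $\mathcal{T}_{X^{[m]}}$ disagree. Since $h(\mathcal{T}_X)=h(\mathcal{T}_{X^{[m]}})$ by Theorem~\ref{thm:n_block_entropy_inv}, entropy is useless here, so I turn to the tree analogue of the number of periodic points. For a tree‑shift $\mathcal{T}\subseteq\mathcal{A}^{\Sigma^*}$ and $k\ge 1$, set
$$
P_k(\mathcal{T}):=\#\{t\in\mathcal{T}:\sigma_w t=t\text{ for all }w\in\Sigma^k\},
$$
where $\sigma_w$ is the shift $(\sigma_w t)_v=t_{wv}$. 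Such a $t$ is determined by $t|_{\Delta_{k-1}}$, because the relation $t_{wv}=t_v$ for $|w|=k$ fixes all remaining values; hence $P_k(\mathcal{T})\le|\mathcal{A}|^{|\Delta_{k-1}|}<\infty$. Moreover $P_k$ is a conjugacy invariant: a conjugacy $\psi:\mathcal{T}\to\mathcal{S}$ and its inverse are sliding block codes, so $\psi\circ\sigma_i=\sigma_i\circ\psi$ for every $i\in\Sigma$, hence $\psi\circ\sigma_w=\sigma_w\circ\psi$ for every $w$, and therefore $\psi$ restricts to a bijection between the sets defining $P_k(\mathcal{T})$ and $P_k(\mathcal{S})$. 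Note $P_1(\mathcal{T}_Y)=\#\{c\in\mathcal{A}:c^\infty\in Y\}$ is merely the number of fixed points of $Y$, which is the same for $Y=X$ and $Y=X^{[m]}$ since $X\cong X^{[m]}$; so $P_1$ will not separate $\mathcal{T}_X$ from $\mathcal{T}_{X^{[m]}}$, and one must take $k\ge 2$. I will use $k=2$.

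\noindent\textbf{The example.} Take $X$ to be the SFT over $\mathcal{A}=\{0,1\}$ with the single forbidden word $111$, and $m=2$. Then $X^{[2]}$ is the $1$‑step SFT on the alphabet $B_2(X)=\{00,01,10,11\}$ with transitions $ab\to bc$ for all $abc\neq 111$, namely $00\to 00$, $00\to 01$, $01\to 10$, $01\to 11$, $10\to 00$, $10\to 01$, $11\to 10$; write $A^{[2]}$ for its adjacency matrix, so that $\mathcal{T}_{X^{[2]}}=\mathcal{T}_{A^{[2]}}$.

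\noindent\textbf{The computation and conclusion.} A tree $t$ with $\sigma_w t=t$ for all $w\in\Sigma^2$ is determined by the tuple $(t_\epsilon;t_0,\dots,t_{d-1})$, and along any chain its label sequence is $(t_\epsilon,t_{c_1},t_\epsilon,t_{c_3},t_\epsilon,\dots)$ with $c_1,c_3,\dots\in\Sigma$ arbitrary; equivalently, every parent–child–grandchild triple along a chain reads $(t_\epsilon,t_i,t_\epsilon)$ or $(t_i,t_\epsilon,t_j)$. For $\mathcal{T}_X$ this shows such a tree lies in $\mathcal{T}_X$ iff it is \emph{not} the case that $t_\epsilon=1$ and $t_i=1$ for some $i$; counting the $2^d$ tuples with $t_\epsilon=0$ together with the single tuple with $t_\epsilon=1$ and all $t_i=0$ gives $P_2(\mathcal{T}_X)=2^d+1$. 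For $\mathcal{T}_{A^{[2]}}$, since $X^{[2]}$ is $1$‑step, such a tree lies in $\mathcal{T}_{A^{[2]}}$ iff $t_\epsilon\to t_i$ and $t_i\to t_\epsilon$ in $A^{[2]}$ for every $i$, i.e.\ each $t_i$ is a bidirectional neighbour of $t_\epsilon$; a direct check of the edge list gives that the only bidirectional neighbour of $00$ is $00$, of $01$ is $10$, of $10$ is $01$, while $11$ has none, so $P_2(\mathcal{T}_{A^{[2]}})=1^d+1^d+1^d+0^d=3$. As the paper's standing hypothesis is $d\ge 2$, we get $P_2(\mathcal{T}_X)=2^d+1\ge 5>3=P_2(\mathcal{T}_{X^{[2]}})$, hence $\mathcal{T}_X\ncong\mathcal{T}_{X^{[2]}}$, proving the proposition.

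\noindent\textbf{Main obstacle.} The two points requiring care are (i) the conjugacy‑invariance of $P_2$, which rests entirely on the observation that a tree‑shift conjugacy and its inverse commute with each coordinate shift $\sigma_i$ (immediate from their sliding‑block‑code form), and (ii) making the equivalence ``$t$ is determined by $t|_{\Delta_1}$ and is $2$‑periodic $\Longleftrightarrow$ $t\in\mathcal{T}$'' precise for the \emph{non}–$1$‑step shift $X$, where one must verify that the no‑$111$ condition along \emph{every} chain really collapses to the stated condition on the single tuple $(t_\epsilon;t_0,\dots,t_{d-1})$. Everything else is a finite, routine enumeration.
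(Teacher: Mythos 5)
Your proof is correct, but it takes a genuinely different route from the paper's. The paper picks the period-three orbit $X=\{(100)^{\infty},(001)^{\infty},(010)^{\infty}\}$ and distinguishes $\mathcal{T}_X$ from $\mathcal{T}_{X^{[2]}}$ by irreducibility: the latter is irreducible while the former admits two admissible patterns that never co-occur in a single tree. You instead use the count $P_k$ of trees fixed by all $\sigma_w$ with $|w|=k$ --- the tree analogue of periodic-point counts --- applied to the SFT forbidding $111$. Your route has two advantages: the invariance of $P_k$ is a one-line consequence of the commutation $\psi\circ\sigma_w=\sigma_w\circ\psi$ (exactly the identity the paper itself uses in Proposition \ref{prop:converse_conjugacy}), whereas the paper leans on a notion of irreducibility for tree-shifts that it never formally defines, together with the unstated fact that this notion is conjugacy-invariant; and your $X$ has positive entropy, showing the phenomenon is not confined to a single periodic orbit. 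The computations check out: for a $2$-periodic tree the labels along any chain are $t_\epsilon,t_{c_1},t_\epsilon,t_{c_3},\ldots$ with the $c_{2i+1}$ ranging independently over $\Sigma$, so the no-$111$ condition collapses to ``not ($t_\epsilon=1$ and some $t_i=1$)'', giving $2^d+1$; the bidirectional-neighbour count in $A^{[2]}$ gives $1^d+1^d+1^d+0^d=3$; and $2^d+1>3$ for $d\ge 2$. The paper's argument, when fleshed out, is shorter on computation but requires setting up irreducibility for tree-shifts; yours is longer to state but entirely self-contained given the paper's definitions.
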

\begin{proof}
Let $X=\mathsf{X}_{\mathcal{F}} \subset \{0,1\}^{\mathbb{N}}$ with $\mathcal{F}=\{000,011,110,101,111\}$ and $Y=\mathsf{X}_{\mathcal{F}'}\subset \{a,b,c\}^{\mathbb{N}}$ with $\mathcal{F}'=\{aa,ac,ba,bb,cb,cc\}$, i.e., $X=\{(100)^{\infty},(001)^{\infty},(010)^{\infty}\}$ and $Y=\{(abc)^{\infty},(bca)^{\infty},(cab)^{\infty}\}$. Note that $Y = X^{[2]}$ by a 2-block map $\Phi:B_2 (X) \to \{a,b,c\}$ defined as $\Phi(00)=a$, $\Phi(01)=b$, $\Phi(10)=c$. Since $X$ is irreducible, $Y$ is also irreducible, which is equivalent to irreducibility of $\mathcal{T}_Y$ by \cite[Theorem 3.3]{Ban}. However, for the admissible blocks
\begin{equation*}
	u:=\vcenter{\hbox{\begin{tikzpicture}[
	scale = 0.6, transform shape, thick,
	every node/.style = {minimum size = 5mm},
	grow = down,  
	level 1/.style = {sibling distance=2cm},
	level 2/.style = {sibling distance=1cm}, 
	level distance = 1.5cm]
	\node[]  (a) {1} [grow'=down]
	child {node[] {0}
		child{node[] {0}}
		child{node[] {0}}
	}
	child {node[] {0}
	child{node[] {0}}
		child{node[] {0}}
	};
	\end{tikzpicture}}}, v:=\vcenter{\hbox{\begin{tikzpicture}[
	scale = 0.6, transform shape, thick,
	every node/.style = {minimum size = 5mm},
	grow = down,  
	level 1/.style = {sibling distance=2cm},
	level 2/.style = {sibling distance=1cm}, 
	level distance = 1.5cm]
	\node[]  (a) {0} [grow'=down]
	child {node[] {0}
		child{node[] {1}}
		child{node[] {1}}
	}
	child {node[] {1}
	child{node[] {0}}
		child{node[] {0}}
	};
	\end{tikzpicture}}},
\end{equation*}
it is not hard to see there exists no $t \in \mathcal{T}_Y$ and $w \in \Sigma^\ast \setminus \Delta_n$ such that $t|_{\Delta_n}=u$ and that $(\sigma_w t)|_{\Delta_n}=v$. Since irreducibility is a conjugacy-invariant property, the lack of such labeled tree $t$ illustrates that $\mathcal{T}_X$ is not irreducible. This leads to a contradiction and hence $\mathcal{T}_{Y} \ncong \mathcal{T}_{X}$.
\end{proof}

Proposition \ref{prop:counterexamples} shows that, even for hom tree-shifts, topological entropy is not conjugacy invariant in general.

\begin{proposition}\label{prop:counterexamples}
There exist SFTs $X$ and $Y$ such that $X \cong Y$ but $h(\mathcal{T}_X) \neq h(\mathcal{T}_Y)$.
\end{proposition}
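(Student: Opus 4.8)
The plan is to produce a topological conjugacy between two Markov shifts that is not a composition of higher-block recodings---so that Theorem~\ref{thm:n_block_entropy_inv} does not force equality---and that carries a defining matrix with constant row sums to one with non-constant row sums; Theorem~\ref{thm:irr_H_condition} then separates the two tree-shift entropies. Concretely, I would let $X=\mathsf{X}_A$ be the full $2$-shift on $\{0,1\}$ with $A=\begin{pmatrix}1&1\\1&1\end{pmatrix}$. Since $A$ is irreducible and $M=m=2$, the ``$M=m$'' direction of Theorem~\ref{thm:irr_H_condition} gives $h(\mathcal{T}_X)=h(\mathsf{X}_A)=\log 2$.

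Next I would take $Y=\mathsf{X}_B$ on the alphabet $\{0,1,2\}$, where
$$
B=\begin{pmatrix}1&1&1\\1&0&0\\0&1&1\end{pmatrix},
$$
obtained by out-splitting the graph of $A$ at the vertex $1$ (separating the edge $1\to 0$ from the loop $1\to 1$). I would then check directly that the $1$-block map $\Phi$ with $\Phi(0)=0$, $\Phi(1)=1$, $\Phi(2)=1$ induces a conjugacy $\mathsf{X}_B\to\mathsf{X}_A$: its inverse sends $x\in\{0,1\}^{\mathbb N}$ to the sequence that has a $0$ wherever $x$ does, a $1$ wherever $x_i=1$ and $x_{i+1}=0$, and a $2$ wherever $x_i=1$ and $x_{i+1}=1$; one verifies this lands in $\mathsf{X}_B$ and is a sliding block inverse of $\Phi$. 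Hence $X\cong Y$, so in particular $h(Y)=h(X)=\log 2$; equivalently $\rho(B)=2$, which one also reads off from $\det(\lambda I-B)=\lambda^{2}(\lambda-2)$. Finally I would note that $B$ is irreducible (from each vertex one reaches $0$, and from $0$ one reaches every vertex) with row sums $3,1,2$, so $M=3>1=m$. The ``$M>m$'' direction of Theorem~\ref{thm:irr_H_condition} then yields $h(\mathcal{T}_Y)>h(\mathsf{X}_B)=\log 2=h(\mathcal{T}_X)$, which is exactly the claim, with both $X$ and $Y$ shifts of finite type.

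There is no genuine obstacle; the one point requiring care is the claim $X\cong Y$, i.e.\ that the proposed $1$-block map is a bijection with sliding block inverse. This reduces to the elementary observation that in a path through the graph of $B$ the two lifts of the symbol $1$ are distinguished by the following symbol (the lift ``$1$'' must be followed by $0$, the lift ``$2$'' by a symbol over $1$), which is precisely why out-splitting produces a conjugacy; since $B$ has no multiple edges, its edge shift coincides with the vertex shift $\mathsf{X}_B$, so no auxiliary recoding is needed. It is worth emphasizing that any example of this kind must use a conjugacy that is \emph{not} a higher-block map, since Theorem~\ref{thm:n_block_entropy_inv} shows that $h(\mathcal{T}_{\,\cdot\,})$ is unchanged under $X\mapsto X^{[m]}$.
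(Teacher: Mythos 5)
Your proposal is correct and follows essentially the same strategy as the paper: take the full $2$-shift (constant row sums, so $h(\mathcal{T}_X)=\log 2$ by the $M=m$ direction of Theorem~\ref{thm:irr_H_condition}) and a conjugate irreducible $3$-state Markov shift with unequal row sums, to which the $M>m$ direction applies. The only difference is bookkeeping: the paper certifies the conjugacy via an elementary strong shift equivalence $A=RS$, $B=SR$, while you do so by an explicit out-splitting with a $1$-block code and its $2$-block inverse, which you verify correctly.
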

\begin{proof}
Let $A, B$ be given as
$$
A = \begin{pmatrix}
1 &1 &0 \\
0 &0 &1 \\
1 &1 &1
\end{pmatrix}
\quad \text{and} \quad
B = \begin{pmatrix}
1 &1 \\
1 &1
\end{pmatrix}.
$$
Observe that $A$ is strongly shift equivalent to $B$ since $A = RS$ and $B = SR$, where
$$
R = \begin{pmatrix}
1 &0 \\
0 &1 \\
1 &1
\end{pmatrix}
\quad \text{and} \quad
S = \begin{pmatrix}
1 &1 &0 \\
0 &0 &1
\end{pmatrix}.
$$
Hence, $\mathsf{X}_A \cong \mathsf{X}_B$. However, Theorem \ref{thm:irr_H_condition} demonstrates that
$$
h(\mathcal{T}_A) > h(\mathsf{X}_A) = h(\mathsf{X}_B) = h(\mathcal{T}_B).
$$
\end{proof}

\section{Further Discussion and Conclusion}

Aside from the discussion of conjugacy invariant of topological entropy for hom tree-SFTs, there are fruitful novel phenomena that have not been seen in $\mathbb{Z}^d$ shift spaces. This section illustrates, last but not least, two of our observations. Recall that each SFT is conjugate with a Markov shift that is its higher block shift (Remark \ref{rmk:intro_HBS}), this section considers only Markov shifts and hom Markov tree-shifts without loss of generality.

\subsection{Reducible Tree-Shifts of Finite Type}

Suppose $A$ is reducible with irreducible components $A_1, \ldots, A_r$. It is well-known that $h(\mathsf{X}_A) = \sup h(\mathsf{X}_{A_i})$ (see \cite{LM-1995} for more details). Besides, the collection of topological entropy of SFTs over $\mathcal{A}$ is dense in $[0, \log |\mathcal{A}|]$, we refer to \cite{Des-IM2006} for more general results. This section reveals that both the properties fail for trees due to the topological structure of trees.

A binary matrix $A$ is called \emph{essential} if $A$ has no zero rows and zero columns. Every nonzero matrix $A \in \{0, 1\}^{k \times k}$ can be reduced to its associated essential matrix $A^e \in \{0, 1\}^{l \times l}$, $1 \le l \le k$, by repeating the following process whenever it is needed: if the $i$th row or column of $A$ is zero, then delete the $i$th row and $i$th column of $A$. It is easily seen that $h(\mathcal{T}_A)=h(\mathcal{T}_{A^e})$. Hence, in studying $h(\mathcal{T}_A)$, we always assume that $A$ is essential.

The following proposition shows that for any $A \in \{0, 1\}^{k \times k}$, $h(\mathcal{T}_A)$ is either $0$ or not less than $\frac{\log 2}{2}$. It follows immediately that the set of topological entropy of hom tree-SFTs is not dense in $[0, \log |\mathcal{A}|]$.

\begin{proposition} \label{prop:entropy_gap}
	Suppose $A$ is an essential binary matrix and $A \neq 0$. Let $M$ be the maximal row sum of $A$.
\item[(i)] If $M = 1$, then $h(\mathcal{T}_A) = 0$.
\item[(ii)] If $M \ge 2$, then $h(\mathcal{T}_A) \ge \frac{d-1}{d} \log M \ge \frac{1}{2} \log 2$.
\end{proposition}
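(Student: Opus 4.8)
The plan is to prove both parts by elementary block counting, relying only on the essentiality of $A$ (no zero row, no zero column).

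For (i): if $M=1$ then, since $A$ has no zero row, each row of $A$ contains exactly one $1$, so there is a function $\sigma:\mathcal{A}\to\mathcal{A}$ with $A_{i,j}=1 \iff j=\sigma(i)$. For every $t\in\mathcal{T}_A$ the defining constraint forces $t_{wi}=\sigma(t_w)$ for all $w\in\Sigma^\ast$ and $i\in\Sigma$, hence $t_w=\sigma^{|w|}(t_\epsilon)$; so each $t\in\mathcal{T}_A$ is determined by $t_\epsilon$ and $|B_n(\mathcal{T}_A)|\le|\mathcal{A}|$ for all $n$, which gives $h(\mathcal{T}_A)=\lim_n\frac{\log|B_n(\mathcal{T}_A)|}{|\Delta_n|}=0$.

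For (ii): the key step is the lower bound $|B_n(\mathcal{T}_A)|\ge M^{d^n}$ for every $n\ge1$, from which the conclusion is immediate: since $|\Delta_n|=\frac{d^{n+1}-1}{d-1}$,
$$h(\mathcal{T}_A)=\lim_{n\to\infty}\frac{\log|B_n(\mathcal{T}_A)|}{|\Delta_n|}\ge\lim_{n\to\infty}\frac{d^n\log M}{|\Delta_n|}=\frac{d-1}{d}\log M\ge\frac{1}{2}\log 2,$$
the last inequality because $d\ge2$ forces $\frac{d-1}{d}\ge\frac12$ and $M\ge2$ forces $\log M\ge\log2$. To obtain the bound I would fix a symbol $i^\ast$ with row sum $M$ and first exhibit one admissible block on $\Delta_{n-1}$ whose entire level $n-1$ is labelled $i^\ast$. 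Such a block is built from the bottom up: label all level-$(n-1)$ nodes $i^\ast$; using that the $i^\ast$-column of $A$ is nonzero, choose a predecessor $p_1$ of $i^\ast$ and label all level-$(n-2)$ nodes $p_1$; using that the $p_1$-column is nonzero, choose a predecessor $p_2$ of $p_1$ and label all level-$(n-3)$ nodes $p_2$; continue up to the root. Every level carries a single symbol and consecutive symbols form an edge of $A$, so the block is admissible. Extending it to $\Delta_n$, each of the $d^n$ leaves is a child of a node labelled $i^\ast$, so its label may be any of the $M$ successors of $i^\ast$, and these choices are independent; this yields at least $M^{d^n}$ admissible $n$-blocks.

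I expect no genuinely hard step here; the only thing to watch is the choice of witnessing configuration. A natural but flawed attempt is to make $i^\ast$ recur down a branch, which requires $i^\ast$ to lie on a cycle of $A$ — and an essential matrix need not have a maximal-row-sum state on a cycle. The construction above avoids this entirely: it uses only that every column of $A$ is nonzero, which is precisely what allows the level-by-level, uniform labelling to climb from the leaves back to the root.
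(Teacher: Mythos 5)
Your proof is correct and follows essentially the same route as the paper's: part (i) via the bound $|B_n(\mathcal{T}_A)|\le|\mathcal{A}|$, and part (ii) via the lower bound $|B_n(\mathcal{T}_A)|\ge M^{d^n}$ obtained by constructing an admissible $(n-1)$-block whose bottom level is uniformly labelled by a maximal-row-sum symbol and then extending freely to level $n$. Your bottom-up, level-by-level construction using the nonzero columns of $A$ simply makes explicit the step the paper dispatches with ``since $A$ is essential.''
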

\begin{proof}
(i) If $M = 1$, it is cleat that $|B_n(\mathcal{T}_A)| \le k$. Then, $h(\mathcal{T}_A) = 0$ follows immediately.

(ii) If $M \ge 2$, there exist $1 \le i' \le k$ and $1 \le j_1 < j_2 < \ldots < j_M \le k$  such that $\sum_{j=1}^k a_{i',j} = \sum_{l=1}^M a_{i',j_l} = M$. Then, for any pattern $u:\Sigma^n \to \{j_1, j_2, \ldots, j_M\}$, there exists an $n$-block $u': \Delta_n \to \mathcal{A}$ such that $u'_w=u_w$ for all $w \in \Sigma^n$. More specifically, since $A$ is essential, there exists an $(n-1)$-block $u'' \in B_{n-1}(\mathcal{T}_A)$ such that $u''_w = i'$ for every $w \in \Sigma^{n-1}$, and the desired $u'$ can be obtained through a proper extension of $u''$. Therefore, $|B_n(\mathcal{T}_A)| \ge M^{d^n}$, which implies $h(\mathcal{T}_A) \ge \frac{d-1}{d} \log M$. The proof is complete. 
\end{proof}

\begin{problem}
Is the set of topological entropy of hom tree-SFTs over $\mathcal{A}$ dense in $[\frac{d-1}{d} \log 2, \log |\mathcal{A}|]$?
\end{problem}

The following proposition, as an application of Proposition \ref{prop:entropy_gap}, shows that the topological entropy of a reducible hom Markov tree-shift could be larger than that of any of its irreducible components.

\begin{proposition}\label{prop:entropy_large_irred_component}
Suppose $A$ is reducible with irreducible components $A_1, \ldots, A_r$. Then $h(\mathcal{T}_A) \geq \sup \{h(\mathcal{T}_{A_i})\}$, and the equality does not hold in general.
\end{proposition}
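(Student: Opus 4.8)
The plan is to treat the two assertions separately: the inequality is soft and follows from a containment of block sets, while the failure of equality is witnessed by a small explicit matrix and is really a consequence of the entropy gap in Proposition~\ref{prop:entropy_gap}.

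For $h(\mathcal{T}_A)\geq\sup_i h(\mathcal{T}_{A_i})$, I would first observe that each irreducible component $A_i$ is, after relabeling, precisely the principal submatrix of $A$ supported on the vertices of the $i$th strongly connected component of the digraph of $A$. Hence every $A_i$-admissible labeled tree is also $A$-admissible, so $\mathcal{T}_{A_i}\subseteq\mathcal{T}_A$ and $B_n(\mathcal{T}_{A_i})\subseteq B_n(\mathcal{T}_A)$ for every $n$. Taking logarithms, dividing by $|\Delta_n|$, and letting $n\to\infty$ gives $h(\mathcal{T}_{A_i})\leq h(\mathcal{T}_A)$; taking the supremum over $i$ finishes this half. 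Nothing beyond the definition of $h(\mathcal{T}_A)$ is used.

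For the strict inequality in general, I would exhibit $\mathcal{A}=\{1,2\}$ and
$$
A=\begin{pmatrix}1&1\\0&1\end{pmatrix},
$$
which is essential and reducible. Its irreducible components are the two $1\times 1$ blocks $A_1=A_2=(1)$, each with maximal row sum $1$, so Proposition~\ref{prop:entropy_gap}(i) gives $h(\mathcal{T}_{A_1})=h(\mathcal{T}_{A_2})=0$. On the other hand the maximal row sum of $A$ is $M=2$, so Proposition~\ref{prop:entropy_gap}(ii) gives $h(\mathcal{T}_A)\geq\frac{d-1}{d}\log 2\geq\frac{1}{2}\log 2>0=\sup_i h(\mathcal{T}_{A_i})$, as desired. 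If a sharper value is wanted, counting the admissible labelings of $\Delta_n$ with root symbol $1$ gives the recursion $P_n=(P_{n-1}+1)^d$ with $P_0=1$, whence $h(\mathcal{T}_A)=\frac{d-1}{d}\log a_\infty$ with $a_\infty=\lim_n P_n^{1/d^n}>2$.

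The only substantive ingredient is Proposition~\ref{prop:entropy_gap}(ii): the point is that a single transient vertex of out-degree at least $2$—a vertex that is invisible to the $\mathbb{Z}$-formula $h(\mathsf{X}_A)=\sup_i h(\mathsf{X}_{A_i})$ and that is absorbed into a trivial component—already forces positive tree entropy, because its out-neighbourhood can be planted freely along the entire level $\Sigma^n$ of the tree. I do not expect a genuine obstacle here; the care needed is purely bookkeeping: checking that the chosen $A$ is essential, that its components are exactly the trivial blocks, and that Proposition~\ref{prop:entropy_gap} applies verbatim. If one instead prefers a counterexample whose components carry positive entropy, the same idea works after adjoining a copy of the two-symbol full-shift component and boosting the out-degree $M$ of a transient vertex so that $\frac{d-1}{d}\log M>\log 2$ (possible, e.g., with $M=3$ once $d\geq 3$), at the cost of a marginally less clean example.
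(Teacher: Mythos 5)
Your proof is correct, and the first half (the inequality via $\mathcal{T}_{A_i}\subseteq\mathcal{T}_A$, hence $|B_n(\mathcal{T}_{A_i})|\leq|B_n(\mathcal{T}_A)|$) is exactly the paper's argument. For the failure of equality, your mechanism is the same as the paper's --- invoke Proposition~\ref{prop:entropy_gap}(ii) to get a lower bound on $h(\mathcal{T}_A)$ from a transient vertex of out-degree $\geq 2$ --- but your witness is different and in one respect cleaner. The paper takes a $4\times 4$ reducible matrix whose two irreducible components are copies of the golden-mean matrix $\bigl(\begin{smallmatrix}1&1\\1&0\end{smallmatrix}\bigr)$; it then needs \emph{both} the lower bound $h(\mathcal{T}_A)\geq\log 2$ from Proposition~\ref{prop:entropy_gap}(ii) (with $M=4$, $d=2$) \emph{and} an upper bound $h(\mathcal{T}_{A'})\leq\frac{\log 5}{3}<\log 2$ obtained from Petersen--Salama's characterization of $h$ as an infimum, evaluated at $n=1$. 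Your example $A=\bigl(\begin{smallmatrix}1&1\\0&1\end{smallmatrix}\bigr)$ has trivial components $(1)$, so the component side is $0$ by Proposition~\ref{prop:entropy_gap}(i) and no upper-bound estimate is needed; the trade-off, which you correctly flag, is that the paper's example is more informative because its components carry positive entropy, showing the gap is not merely an artifact of trivial components. Your optional recursion $P_n=(P_{n-1}+1)^d$ and the claim $a_\infty=\lim_n P_n^{1/d^n}>2$ are also correct (the sequence $\log P_n/d^n$ is increasing and bounded), though not needed. Everything checks out: your $A$ is essential, its strongly connected components are exactly the two $1\times 1$ blocks, and Proposition~\ref{prop:entropy_gap} applies as stated.
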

\begin{proof}
Obviously, $h(\mathcal{T}_A) \geq \sup \{h(\mathcal{T}_{A_i})\}$ since $|B_n(\mathcal{T}_A)| \geq |B_n(\mathcal{T}_{A_i})|$ for all $n \in \mathbb{N}$ and $1 \leq i \leq r$.

Consider $A = \begin{bmatrix} 1 & 1 & 1 & 1\\ 1 & 0 & 1 & 1\\0 & 0 & 1 & 1\\0 & 0 & 1 & 0  \end{bmatrix}$ with the irreducible components $A' = A_1 = A_2 = \begin{bmatrix} 1 & 1\\1 & 0  \end{bmatrix}$ on $2$-tree. Proposition \ref{prop:entropy_gap} (ii) indicates that $h(\mathcal{T}_A) \ge \log 2$. Petersen and Salama \cite{PS-TCS2018} proved that
\[h(\mathcal{T}_{A'})  = \inf_n \frac{\log |B_n(\mathcal{T}_{A'})|}{|\Delta_n|} \le \frac{\log |B_1(\mathcal{T}_{A'})|}{3} = \frac{\log 5}{3} < \log 2 \le h(\mathcal{T}_A).
\]
So in general, for reducible matrix $A$, $h(\mathcal{T}_A) \ge \sup h(\mathcal{T}_{A_i})$, where $A_i$ denotes the $i$-th irreducible component of the system defined by $A$.
\end{proof}

On the other hand, there is a nontrivial example such that $h(\mathcal{T}_A) = \sup \{h(\mathcal{T}_{A_i})\}$, where $A_1. A_2, \ldots, A_r$ are the irreducible components of $A$.

\begin{example}
Let $A = \begin{bmatrix} 1 & 1 & 0\\ 0 & 1 & 1\\0 & 1 & 1  \end{bmatrix}$. It can be calculated in the same manner as in the proof of Theorem \ref{thm:irr_H_condition} that $h(\mathcal{T}_{A})=\log 2$ while $A$ is an reducible matrix.
\end{example}

\begin{problem}
Suppose $A$ is reducible with irreducible components $A_1, \ldots, A_r$. Under what condition the equality $h(\mathcal{T}_{A}) = \sup \{h(\mathcal{T}_{A_i})\}$ holds?
\end{problem}

\subsection{Conclusion}

This paper focuses on the topological entropy of hom tree-SFTs. In the following list of main results of this elucidation, many of them, rather than consequences, are just the beginning of further investigation.
\begin{enumerate}
    \item Suppose $\mathcal{T}_X$ is the hom tree-SFT induced by the SFT $X$. Then $h(\mathcal{T}_X) = h(X)$ if and only if $\sum_j A_{i, j} = \sum_j A_{i', j}$ for all $i, i'$, where $A$ is the adjacency matrix of the graph representation of $X$ (Theorems \ref{thm:irr_H_condition} and \ref{thm:n_block_entropy_inv}.)
    \item Suppose $X$ and $Y$ are shift spaces. Then $\mathcal{T}_X \cong \mathcal{T}_Y$ is sufficient but not necessary for $X \cong Y$ (Propositions \ref{prop:converse_conjugacy} and \ref{prop:homTSFT-higher-block-not-conjugate}).
    \item The set $\{h(\mathcal{T}_X)\}$ of topological entropy of hom tree-SFTs is not dense in $[0, \log |\mathcal{A}|]$ (Proposition \ref{prop:entropy_gap}).
    \item There is a hom Markov tree-shift that has strictly larger topological entropy than shifts formed by any of its irreducible components (Proposition \ref{prop:entropy_large_irred_component}).
\end{enumerate}

\subsection*{Acknowledgements}
We thank Professors Karl Petersen and Ibrahim Salama for helpful comments on these topics, and we appreciate the comments from the reviewers for they greatly improve the readability of this article.

\bibliographystyle{amsplain}
\bibliography{grece}

\end{document}